\numberwithin{equation}{section}
\theoremstyle{definition}\newtheorem{definition}{Definition}
\newtheorem{lemma}[definition]{Lemma}
\newtheorem{theorem}[definition]{Theorem}
\newcommand{\F}{\mathbb{F}}
\newcommand{\Z}{\mathbb{Z}}
\newcommand{\C}{\mathbb{C}}
\newcommand{\cU}{\mathcal{U}}
\newcommand{\bigovt}{\overline{\bigotimes}}
\newcommand{\rL}{\operatorname{L}}
\newcommand{\rE}{\operatorname{E}}
\newcommand{\acts}[1]{\overset{#1}{\curvearrowright}}
\newcommand{\bigot}{\bigotimes}
\newcommand{\recht}{\rightarrow}
\newcommand{\eps}{\varepsilon}
\newcommand{\SL}{\operatorname{SL}}
\newcommand{\N}{\mathbb{N}}
\newcommand{\al}{\alpha}
\newcommand{\dis}{\displaystyle}
\newcommand{\si}{\sigma}
\newcommand{\actson}{\curvearrowright}
\begin{document}

\begin{center}
{\LARGE\bf An inner amenable group whose von Neumann algebra\vspace{0.5ex}\\ does not have property Gamma}

\bigskip

{\sc by Stefaan Vaes\footnote{Partially
    supported by ERC Starting Grant VNALG-200749, Research
    Programme G.0231.07 of the Research Foundation --
    Flanders (FWO) and K.U.Leuven BOF research grant OT/08/032.\\ Department of Mathematics;
    K.U.Leuven; Celestijnenlaan 200B; B--3001 Leuven (Belgium).
    \\ E-mail: stefaan.vaes@wis.kuleuven.be}}
\end{center}

\begin{abstract}\noindent
We construct inner amenable groups $G$ with infinite conjugacy classes and such that the associated II$_1$ factor does not have property Gamma of Murray and von Neumann. This solves a problem posed by Effros in 1975.
\end{abstract}

\section{Introduction}

In order to exhibit two non-isomorphic II$_1$ factors, Murray and von Neumann introduced in \cite{MvN43} their qualitative {\it property Gamma.} They showed that the group von Neumann algebra $\rL \F_n$ of the free group $\F_n$, $n \geq 2$, does not have property Gamma, while the group von Neumann algebra $\rL S_\infty$ of the group of finite permutations of $\N$ has property Gamma.

Following \cite{MvN43}, a II$_1$ factor $M$ with trace $\tau$ has property Gamma if there exists a sequence of unitaries $x_n$ in $M$ satisfying $\tau(x_n) = 0$ for all $n$ and $\|x_n y - y x_n \|_2 \recht 0$ for all $y \in M$. Here $\| \cdot \|_2$ denotes the $\rL^2$-norm on $M$ given by $\|x\|_2 = \sqrt{\tau(x x^*)}$.

In \cite{effros}, Effros aims to express property Gamma for a group von Neumann algebra $\rL G$ in terms of a group theoretic property. In this respect, he introduced the notion of \emph{inner amenability} for a countable group $G$, by requiring the existence of a mean on $G \setminus \{e\}$ that is invariant under all inner automorphisms. More precisely, $G$ is inner amenable if there exists a finitely additive measure $m$ on the subsets of $G \setminus \{e\}$, with total mass $1$ and satisfying $m(g X g^{-1}) = m(X)$ for all $X \subset G \setminus \{e\}$ and all $g \in G$. In \cite{effros}, Effros proved that if $\rL G$ is a II$_1$ factor with property Gamma, then $G$ is inner amenable. He posed the question whether the converse holds: does $\rL G$ have property Gamma whenever $G$ is an inner amenable group with infinite conjugacy classes (icc). This problem attracted a lot of attention over the years, see e.g.\ \cite[Problem 2]{harpe} and the survey \cite{bedos-harpe}.
In attempts to answer Effros' question, several groups were first shown to be inner amenable (e.g.\ Thompson's group \cite{Jol97}, Baumslag-Solitar groups \cite{stalder}), but later shown to satisfy property Gamma as well (e.g.\ \cite{Jol00}).

We solve Effros' question in the negative by providing concrete examples of inner amenable icc groups $G$ such that $\rL G$ does not have property Gamma.

The construction in this paper is inspired by the following similar phenomenology in ergodic theory of group actions, exhibited by Schmidt \cite[Example 2.7]{schmidt}. Let $G \actson (X,\mu)$ be a measure preserving action of a countable group $G$ on a standard non-atomic probability space $(X,\mu)$. The action is called \emph{strongly ergodic} if the following implication holds: whenever $\cU_n \subset X$ is a sequence of almost invariant measurable subsets (i.e.\ $\mu(g \cdot \cU_n \bigtriangleup \cU_n) \recht 0$ for all $g \in G$), then $\mu(\cU_n) (1-\mu(\cU_n)) \recht 0$. It is easy to see (e.g.\ \cite[Proposition 6.3.2]{BHV}) that whenever $G \actson (X,\mu)$ is such that the Koopman representation $G \recht \cU(\rL^2(X) \ominus \C 1)$ does not weakly contain the trivial representation, then $G \actson (X,\mu)$ is strongly ergodic. In \cite[Example 2.7]{schmidt}, Schmidt shows that the converse can fail.

\subsubsection*{Acknowledgment}
It is my pleasure to thank Sorin Popa for bringing to my attention Schmidt's example \cite[Example 2.7]{schmidt} which triggered the construction presented in this paper.

\section{Construction of the group $G$}

Fix a sequence of distinct prime numbers $p_n$. We define as follows a countable group $G$. Define
$$H_n := \Bigl(\frac{\Z}{p_n \Z}\Bigr)^3 \quad\text{and}\quad K := \bigoplus_{n=0}^\infty H_n \; .$$
Put $\Lambda = \SL(3,\Z)$, which acts on $H_n$ by automorphisms in the natural way. We denote this action as $g \cdot x$ whenever $g \in \Lambda$, $x \in H_n$. We let $\Lambda$ act on $K$ diagonally~: $(g \cdot x)_n = g \cdot x_n$ for all $g \in K$ and $n \in \N$. For every $N \in \N$, define the subgroup $K_N < K$ as
$$K_N := \bigoplus_{n = N}^\infty H_n \; .$$
We put $G_0 = K \rtimes \Lambda$ and inductively define $G_{N+1}$ as the following amalgamated free product.
$$G_N \hookrightarrow G_{N+1} := G_N *_{K_N} (K_N \times \Z) \; .$$
Note here that we view $K_N$ as a subgroup of $G_N$ by considering $K_N < K < G_0 < G_N$.
We finally define $G$ as the inductive limit of the increasing sequence of groups $G_0 \subset G_1 \subset \cdots$.

\begin{theorem}\label{thm.main}
The group $G$ is inner amenable and has infinite conjugacy classes while the II$_1$ factor $\rL G$ does not have property Gamma.
\end{theorem}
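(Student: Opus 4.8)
Write $M=\rL G$ with its canonical trace $\tau$, and for each $N$ let $t_N$ denote the generator of the copy of $\Z$ in $G_{N+1}=G_N *_{K_N}(K_N\times\Z)$, so that $t_N$ commutes with $K_N=\bigoplus_{n\ge N}H_n$. I would prove the three assertions in increasing order of difficulty, putting the weight on the failure of Gamma. For the icc property I would check that every $g\neq e$ has infinite conjugacy class, which simultaneously makes $M$ a II$_1$ factor. If $g\in G_0=K\rtimes\Lambda$ has nontrivial $\Lambda$-component $\lambda$, then conjugating by $x\in K$ already produces the infinite set $(1-\lambda)\cdot K$, since $\lambda$ acts nontrivially on $H_n$ for all large $n$ (here $p_n\recht\infty$). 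For the remaining $g$ I would pass to the Bass--Serre normal form of the iterated amalgam and conjugate by a suitable $t_m$: if $g\notin K_m$ then the $t_m^k g t_m^{-k}$, $k\in\Z$, are pairwise distinct reduced words. The one routine but genuinely needed piece is the normal-form analysis showing that the set $S$ of elements with \emph{finite} $\Lambda$-conjugation orbit is exactly $K$; I isolate this because it is reused below.

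Inner amenability I would get from the explicit unit vectors $\eta_n=(p_n^3-1)^{-1/2}\sum_{x\in H_n\setminus\{0\}}\delta_x$ in $\ell^2(G\setminus\{e\})$ for the conjugation action. Since $\SL(3,\F_{p_n})$ acts transitively on $\F_{p_n}^3\setminus\{0\}$, the set $H_n\setminus\{0\}$ is a single $\Lambda$-orbit and $\eta_n$ is $\Lambda$-invariant; $K$ is abelian; and $t_m$ fixes $H_n$ once $n\ge m$. Thus for each fixed generator $g$ one has $g\eta_n g^{-1}=\eta_n$ for all large $n$, and the usual cocycle/triangle-inequality argument upgrades this to $\|g\eta_n g^{-1}-\eta_n\|\recht0$ for every $g\in G$; the standard equivalence between almost invariant vectors in $\ell^2(G\setminus\{e\})$ and a conjugation-invariant mean then finishes it. I would emphasize that the associated operators $\sum_x u_x/\sqrt{p_n^3-1}$ have operator norm $\sqrt{p_n^3-1}\recht\infty$: this unboundedness is the precise reason the construction stops short of Gamma.

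For the failure of Gamma let $(x_k)$ be an operator-norm-bounded central sequence; the goal is $\|x_k-\tau(x_k)1\|_2\recht0$, i.e. $M'\cap M^\omega=\C$. The plan uses property (T) of $\Lambda=\SL(3,\Z)$ twice. First, the conjugation representation of $\Lambda$ on $\ell^2(G)$ has a uniform spectral gap off its invariant vectors; since $\|[u_\lambda,x_k]\|_2\recht0$ for the $\lambda$ in a Kazhdan set, $x_k$ is asymptotically supported on the finite-$\Lambda$-orbit elements $S=K$, so $\|x_k-f_k\|_2\recht0$ with $f_k=E_{\rL K}(x_k)$ and $\|f_k\|_\infty\le1$. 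Second, applying (T) to $\Lambda\actson\rL K=L^\infty(\widehat K)$ on the orthocomplement of the invariants, asymptotic $\Lambda$-invariance of $f_k$ gives $\|f_k-\tilde f_k\|_2\recht0$, where $\tilde f_k=E[f_k\mid\mathcal B]$ is the (contractive, trace-preserving) conditional expectation onto the $\Lambda$-invariant subalgebra $L^\infty(\mathcal B)$, generated by the independent bits $b_n(\zeta)=\mathbf 1[\zeta_n\neq0]$ with $P(\zeta_n=0)=p_n^{-3}$. Commutation with $t_m$ enters next: as $t_m x t_m^{-1}\notin K$ for $x\in K\setminus K_m$, projecting $u_{t_m}\tilde f_k u_{t_m}^*$ back to $\rL K$ discards everything off $K_m$, so $\|\tilde f_k-E_{\rL K_m}(\tilde f_k)\|_2\recht0$ in $k$ for each fixed $m$. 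Finally I would prove the uniform tail estimate $\|E_{\rL K_m}(g)-\tau(g)1\|_2\le C\sqrt{\eps_m}$ for all $\Lambda$-invariant $g$ with $\|g\|_\infty\le1$, where $\eps_m=\sum_{n\ge m}p_n^{-3}$, and combine it with the previous line to conclude.

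The main obstacle is exactly this last combination. For fixed $k$ one has $E_{\rL K_m}(\tilde f_k)\recht\tau(\tilde f_k)1$ as $m\recht\infty$ by reverse-martingale convergence, while the $t_m$-commutation only kills the low Fourier modes of $\tilde f_k$ as $k\recht\infty$; the two limits are in the wrong order and can be reconciled only through a modulus of convergence uniform over the bounded family $\{\tilde f_k\}$. Such uniformity is precisely what $\sum_n|\widehat H_n|^{-1}=\sum_n p_n^{-3}<\infty$ provides: the tail event $\{b_n=1\text{ for all }n\ge m\}$ has probability $\ge1-\eps_m$ and dominates the algebra generated by $K_m$, forcing any bounded $\Lambda$-invariant function to be nearly $\tau$-constant after $E_{\rL K_m}$. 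This is also the conceptual crux separating inner amenability from Gamma: the almost-central vectors $\eta_n$ live on exactly these invariant functions but explode in operator norm, and the uniform estimate is what forbids a genuinely bounded central sequence from concentrating there.
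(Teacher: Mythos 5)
Your proposal is correct and follows essentially the same route as the paper: the normal-form analysis identifying $K$ as the set of elements with finite $\Lambda$-conjugation orbit, the almost-conjugation-invariant vectors supported on $H_n$, property (T) of $\SL(3,\Z)$ to push a bounded central sequence into $\rL K\cap(\rL\Lambda)'\cong\bigovt_n\C^2$, the orthogonality trick with the free-product generators $t_m$ to land in $\rL K_m$, and the uniform tail estimate coming from $\sum_n p_n^{-3}<\infty$. The only differences are cosmetic (you apply (T) in two steps where the paper uses one projection onto the $\pi(\Lambda)$-invariant vectors, and you normalize the inner-amenability vectors on $H_n\setminus\{0\}$ rather than $H_n$).
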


\section{Proof of Theorem \ref{thm.main}}

We denote by $\rL G$ the group von Neumann algebra of a countable group $G$, generated by the unitaries $(u_g)_{g \in G}$. We denote by $(\delta_g)_{g \in G}$ the canonical orthonormal basis of $\ell^2(G)$. Then, $\ell^2(G)$ is an $\rL G$ - $\rL G$ - bimodule, given by $u_g \delta_k u_h = \delta_{gkh}$. On $\rL G$, we consider the usual trace given by $\tau(x) = \langle \delta_e,x \delta_e \rangle$.

\begin{lemma}\label{easy-lemma}
For every $g \in G \setminus K$, the set $\{hgh^{-1} \mid h \in \Lambda\}$ is infinite. Also, $G$ has infinite conjugacy classes.
\end{lemma}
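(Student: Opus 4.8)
The plan is to stratify $G\setminus K$ by the canonical surjection onto $\Lambda$. First I would build a homomorphism $\pi\colon G\to\SL(3,\Z)$ using the universal property of the amalgamated free products: put $\pi|_\Lambda=\mathrm{id}$, $\pi(K)=\{e\}$, and send the generator $t_N$ of the new $\Z$-factor in $G_{N+1}=G_N*_{K_N}(K_N\times\Z)$ to $e$. This is consistent at each inductive step because $\pi$ is already trivial on the amalgamated subgroup $K_N\subset K$, so it extends to $G_{N+1}$ and then to the limit $G$. Now $\SL(3,\Z)$ is icc (its centre is trivial since $3$ is odd, so $\SL(3,\Z)=\operatorname{PSL}(3,\Z)$, which is icc); hence for any $g$ with $\pi(g)=\lambda\neq e$ the images $\pi(hgh^{-1})=h\lambda h^{-1}$, $h\in\Lambda$, already form an infinite set, and therefore $\{hgh^{-1}\mid h\in\Lambda\}$ is infinite. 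This settles every $g\in G_0\setminus K$, since $g\in K\rtimes\Lambda$ and $g\notin K$ force $\pi(g)\neq e$.

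The remaining and principal case of the first assertion is $g\in G\setminus K$ with $\pi(g)=e$; then $g\notin G_0$, so $g\in G_N\setminus G_{N-1}$ for some minimal $N\geq1$. I would analyse this through the Bass--Serre tree $T$ of the top amalgam $G_N=G_{N-1}*_{K_{N-1}}(K_{N-1}\times\Z)$. Let $v_0$ be the base vertex, so that $\operatorname{Stab}(v_0)=G_{N-1}$; since $\Lambda\subset G_0\subset G_{N-1}$ the whole of $\Lambda$ fixes $v_0$, while $g\notin G_{N-1}$ gives $gv_0\neq v_0$. A short computation from $hgh^{-1}v_0=h(gv_0)$ shows $C_\Lambda(g)\subseteq\operatorname{Stab}_\Lambda(gv_0)$, so it suffices to prove that the $\Lambda$-orbit of the vertex $gv_0$ is infinite. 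This is the step I would flag as the main obstacle. The key point is to exclude that any finite-index subgroup $\Lambda_0\leq\Lambda$ fixes $gv_0$: such a $\Lambda_0$ would fix the geodesic $[v_0,gv_0]$ pointwise, hence fix its first edge $aK_{N-1}$ with $a\in G_{N-1}$, forcing $a^{-1}\Lambda_0 a\subseteq K_{N-1}$. Since $K_{N-1}=\bigoplus_{n\geq N-1}H_n$ is abelian while $a^{-1}\Lambda_0a\cong\Lambda_0$ is a finite-index, hence non-abelian, subgroup of $\SL(3,\Z)$ (otherwise $\SL(3,\Z)$ would be virtually abelian, contradicting that it contains $\F_2$), this is impossible. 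Hence $\operatorname{Stab}_\Lambda(gv_0)$ has infinite index and the $\Lambda$-orbit of $g$ is infinite.

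For the icc claim, the first assertion already shows that every $g\in G\setminus K$ has an infinite conjugacy class, the $\Lambda$-orbit being contained in it, so only $g\in K\setminus\{e\}$ remains. Here $\Lambda$-conjugation is useless, as it factors through the finite groups $\SL(3,\Z/p_n\Z)$ and produces only finite orbits, so instead I would conjugate by powers of a single $t_N$. Choosing $m$ with $g_m\neq0$ and any $N>m$, I split $g=g'g''$ with $g'\in\bigl(\bigoplus_{n<N}H_n\bigr)\setminus\{e\}$ and $g''\in K_N$, so that $t_N^{\,j}\,g\,t_N^{-j}=(t_N^{\,j}\,g'\,t_N^{-j})\,g''$ because $g''$ commutes with $t_N$. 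Since $g'\notin K_N$ and, in the amalgam $G_{N+1}=G_N*_{K_N}(K_N\times\Z)$, the centraliser of every nontrivial power of $t_N$ meets $G_N$ exactly in $K_N$, the elements $t_N^{\,j}\,g'\,t_N^{-j}$ are pairwise distinct; thus $g$ has infinitely many conjugates, completing the proof of icc.
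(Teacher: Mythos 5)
Your proof is correct, and although it follows the same three-way case division as the paper (elements with nontrivial $\Lambda$-part, elements of $G\setminus G_0$, and elements of $K\setminus\{e\}$), the execution is genuinely different and in one place strictly more careful. The paper disposes of $g\in G_{N+1}\setminus G_N$ with the one-line claim that the conjugates $hgh^{-1}$, $h\in G_N$, are \emph{all distinct}; taken literally this fails (for $g=g_{N+1}$ the centralizer in $G_N$ contains all of $K_N$), so something like your Bass--Serre argument is what is really needed to reach the stated conclusion. You supply exactly the right substitute: a finite-index stabilizer $\Lambda_0$ would fix the first edge of the geodesic $[v_0,gv_0]$ and hence be conjugate into the abelian group $K_{N-1}$, which is impossible because no finite-index subgroup of $\SL(3,\Z)$ is abelian. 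That last ingredient --- $\Lambda$ is not virtually abelian --- is indispensable and only implicit in the paper. Your retraction $\pi:G\recht\Lambda$ is a clean packaging of the case $g\in G_0\setminus K$, where the paper simply invokes that $\Lambda$ is icc, and your treatment of $g\in K\setminus\{e\}$ by conjugating with powers of $t_N=g_{N+1}$ is a concrete, verified instance of the paper's appeal to the fact that an element of $G_N\setminus K_N$ cannot have a finite conjugacy class in $G_N*_{K_N}(K_N\times\Z)$; your computation of $C_{G_{N+1}}(t_N^k)\cap G_N=K_N$ is correct (a second fixed vertex of $t_N^k$ would put $t_N^k$ in the stabilizer of an edge at the vertex $K_N\times\Z$, which equals $K_N$ by normality of $K_N$ in $K_N\times\Z$). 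In short: same skeleton, but your version fills the gaps that the paper leaves open, at the cost of invoking Bass--Serre theory where the paper intends a bare-hands normal-form argument.
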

\begin{proof}
If $g \in G \setminus G_0$, take $N \geq 0$ such that $g \in G_{N+1} \setminus G_N$. From the description of $G_{N+1}$ as the amalgamated free product $G_{N+1} = G_N *_{K_N} (K_N \times \Z)$, it follows that the elements $h g h^{-1}$, $h \in G_N$, are all distinct. In particular, $\{h gh^{-1} \mid h \in \Lambda\}$ is infinite. If $g \in G_0 \setminus K$, the set $\{h gh^{-1} \mid h \in \Lambda\}$ is infinite because $\Lambda$ has infinite conjugacy classes.

Finally, assume that $g \neq e$ has a finite conjugacy class. By the first part of the proof, $g \in K$. Taking $N$ large enough, $g \in K \setminus K_N$. So, $g \in G_N \setminus K_N$ and we arrive at the contradiction that $g$ has a finite conjugacy class in $G_N *_{K_N} (K_N \times \Z)$.
\end{proof}

Denote by $(A_n,\tau)$ the tracial von Neumann algebra with $A_n \cong \C^2$ and with minimal projections $e_n,1-e_n$ such that $\tau(e_n) = p_n^{-3}$.

\begin{lemma} \label{lemma}
Define $(A,\tau) := \bigovt_{n=1}^\infty (A_n,\tau)$. There is a unique trace preserving bijective isomorphism
$$\al : A \recht \rL G \cap (\rL \Lambda)'$$
satisfying $\dis \al(e_n) = p_n^{-3} \sum_{h \in H_n} u_{h_n} \; .$
\end{lemma}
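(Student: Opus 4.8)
The plan is to produce $\al$ in two movements: first exhibit an explicit family of commuting projections realizing $A$ inside $\rL G$ and commuting with $\rL \Lambda$, and then prove that these already generate the \emph{entire} relative commutant. For the first movement, set $f_n := p_n^{-3}\sum_{h \in H_n} u_{h_n} \in \rL G$, where $h_n \in K$ denotes $h$ placed in the $n$-th coordinate. Since $H_n$ is a finite abelian subgroup of $G$, a direct computation gives $f_n = f_n^* = f_n^2$, so each $f_n$ is a projection with $\tau(f_n) = p_n^{-3}$. For $g \in \Lambda$ one has $u_g f_n u_g^* = p_n^{-3}\sum_{h \in H_n} u_{(g \cdot h)_n} = f_n$, because $h \mapsto g \cdot h$ permutes $H_n$; hence $f_n \in (\rL \Lambda)'$. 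As the subgroups $H_n$ sit in $K$ as a direct sum, the $f_n$ commute and, for any finite $S$, $\tau\bigl(\prod_{n \in S} f_n\bigr) = \prod_{n \in S} p_n^{-3} = \prod_{n \in S}\tau(f_n)$. This factorisation of moments shows that $e_n \mapsto f_n$ extends to a trace preserving normal $*$-isomorphism $\al$ of $A$ onto $\{f_n\}'' \subseteq \rL G \cap (\rL \Lambda)'$; uniqueness is immediate since the $e_n$ generate $A$.

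The substance of the lemma is the reverse inclusion $\rL G \cap (\rL \Lambda)' \subseteq \al(A)$. Let $x = \sum_{g \in G} x_g u_g \in \rL G$ commute with $\rL \Lambda$. Conjugation invariance forces $x_g = x_{hgh^{-1}}$ for all $h \in \Lambda$, so the Fourier coefficients are constant along $\Lambda$-conjugacy orbits. By Lemma \ref{easy-lemma} every $g \in G \setminus K$ has an infinite such orbit, and since $\sum_g |x_g|^2 = \|x\|_2^2 < \infty$ this forces $x_g = 0$ off $K$; equivalently $\rE_{\rL K}(x) = x$, so $x \in \rL K$. Identifying $\rL K$ with $\rL^\infty(\widehat{K})$, it then remains to show that every $\Lambda$-invariant element of $\rL K$ lies in $\al(A)$.

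The key point is to describe the $\Lambda$-orbits on $K = \bigoplus_n H_n$. The support $S = \{n : k_n \neq 0\}$ of $k \in K$ is a $\Lambda$-invariant finite set, and I claim that $\Lambda$ acts transitively on the set $O_S$ of all $k$ with support exactly $S$. This is where the arithmetic of $\SL(3,\Z)$ enters: the reduction map $\SL(3,\Z) \recht \SL(3,\Z/m\Z)$ is surjective for every $m$, so by the Chinese remainder theorem $\SL(3,\Z)$ surjects onto $\prod_{n \in S}\SL(3,\F_{p_n})$, while each factor $\SL(3,\F_{p_n})$ acts transitively on $\F_{p_n}^3 \setminus \{0\}$; hence the full product group, and therefore $\Lambda$, acts transitively on $\prod_{n \in S}(H_n \setminus \{0\}) = O_S$. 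Consequently the $\Lambda$-orbits in $K$ are exactly the sets $O_S$, $S$ finite, so the invariant vectors in $\rL^2(\rL K)$ are spanned by the orbit sums $\chi_S := \sum_{k \in O_S} u_k = \prod_{n \in S}\bigl(p_n^3 f_n - 1\bigr)$. Each $\chi_S$ is a polynomial in the $f_n$ and so lies in $\al(A)$; thus the space of $\Lambda$-invariant vectors in $\rL^2(\rL K)$ coincides with the $\rL^2$-closure of $\al(A)$. Since $\al(A)$ is a von Neumann subalgebra of the abelian algebra $\rL K$, any bounded $\Lambda$-invariant $x$ then satisfies $\rE_{\al(A)}(x) = x$ and hence belongs to $\al(A)$, completing the proof.

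The main obstacle is precisely this transitivity statement: a priori the finite groups $\SL(3,\F_{p_n})$ look independent, and what makes the argument work is that $\SL(3,\Z)$ maps \emph{onto} their product, forcing joint transitivity on $O_S$. Once the orbit description is in hand, everything else is a routine Fourier and moment computation; I would expect the remaining care to go into checking that the moment factorisation genuinely yields a trace preserving isomorphism onto $\{f_n\}''$ rather than merely a unital positive map, and into the passage from $\rL^2$-closure back to the von Neumann algebra $\al(A)$ via the trace preserving conditional expectation $\rE_{\al(A)}$.
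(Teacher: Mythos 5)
Your proof is correct and follows essentially the same route as the paper's: reduce to $\rL K \cap (\rL \Lambda)'$ via Lemma \ref{easy-lemma}, then identify the $\Lambda$-fixed points using the fact that the $\Lambda$-orbits on $\bigoplus_n H_n$ are exactly the products $\prod_{n \in S}(H_n \setminus \{0\})$ over finite supports $S$. The only differences are cosmetic --- you work with conjugation-orbit sums directly in $\rL K$ where the paper Fourier-transforms to $\bigovt_n \ell^\infty(H_n)$ and computes fixed functions --- and you additionally supply, via surjectivity of $\SL(3,\Z) \recht \SL(3,\Z/m\Z)$ and the Chinese remainder theorem, a proof of the transitivity claim that the paper asserts without justification (and which is where the distinctness of the primes $p_n$ enters).
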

\begin{proof}
By Lemma \ref{easy-lemma}, $\rL G \cap (\rL \Lambda)' = \rL K \cap (\rL \Lambda)'$. Put $B_n = \ell^\infty(H_n)$ and define the trace $\tau$ on $B_n$ given by the normalized counting measure. View $A_n \subset B_n$ in a trace preserving way and such that $e_n$ corresponds to the function $\chi_{\{0\}}$.

Define $\Lambda \acts{\theta} B_n$ by $(\theta_g(F))(x) = F(g^{-1} \cdot x)$ for all $g \in \Lambda$, $x \in H_n$ and $F \in B_n$. Define $\Lambda \acts{\sigma} \rL K$ by $\sigma_g(u_x) = u_{g \cdot x}$ for all $g \in \Lambda$, $x \in K$. We have $H_n \cong \widehat{H_n}$ and the Fourier transform yields a trace preserving isomorphism $\al_n : B_n \recht \rL H_n$ satisfying $\al_n \circ \theta_g = \si_{(g^{-1})^T} \circ \al_n$. Here, $g^T$ denotes the transpose of $g \in \Lambda = \SL(3,\Z)$.

Put $(B,\tau) = \bigovt_{n=1}^\infty (B_n,\tau)$ and define $\Lambda \acts{\theta} B$ diagonally. The isomorphisms $\al_n$ combine into a trace preserving isomorphism $\al : B \recht \rL K$ satisfying $\al \circ \theta_g = \si_{(g^{-1})^T} \circ \al$ for all $g \in \Lambda$. We view $A$ as a von Neumann subalgebra of $B$. The lemma then follows once we have shown that $B^\Lambda = A$, where, by definition $B^\Lambda = \{b \in B \mid \theta_g(b) = b \;\;\text{for all}\;\; g \in \Lambda \}$.

The orbits of the diagonal action $\Lambda \actson (H_1 \times \cdots \times H_N)$ are precisely the sets $\cU_1 \times \cdots \times \cU_N$ where every $\cU_i$ is either $\{0\}$ or $H_i \setminus \{0\}$. Hence, $\Bigl(\bigot_{n=1}^N B_n\Bigr)^\Lambda = \bigot_{n=1}^N A_n$. The lemma follows by letting $N \recht \infty$.
\end{proof}

\begin{proof}[Proof of Theorem \ref{thm.main}]
We saw in Lemma \ref{easy-lemma} that $G$ has infinite conjugacy classes. 

Embed $\rL G \hookrightarrow \ell^2(G)$ by $x \mapsto x \delta_e$. Define $\xi_n = p_n^{3/2} \al(e_n) \delta_e$. Then, $\xi_n$ is a sequence of unit vectors in $\ell^2(G)$ satisfying $\langle \delta_e,\xi_n \rangle = p_n^{-3/2} \recht 0$. Moreover, by construction, $u_g \xi_n u_g^* = \xi_n$ whenever $g \in G_N$ and $n > N$. Hence, for every $g \in G$, the sequence $\|u_g \xi_n u_g^* - \xi_n \|_2$ is eventually zero. It follows that the adjoint representation of $G$ on $\ell^2(G) \ominus \C \delta_e$ weakly contains the trivial representation. Hence, $G$ is inner amenable (see e.g.\ \cite[Th\'{e}or\`{e}me 1]{bedos-harpe}).

Suppose that $x_n$ is a sequence of unitaries in $\rL G$, such that $\|x_n y - y x_n \|_2 \recht 0$ for all $y \in \rL G$. We have to prove that $\|x_n - \tau(x_n) 1 \|_2 \recht 0$. Denote by $\pi : G \recht \cU(\ell^2(G)) : \pi(g) \xi = u_g \xi u_g^*$ the adjoint representation. Then $\xi_n := x_n \delta_e$ is a sequence of almost $\pi$-invariant unit vectors. Denote by $P$ the orthogonal projection of $\ell^2(G)$ onto the closed subspace of $\pi(\Lambda)$-invariant vectors. Since $\Lambda$ has property (T), it follows that $\|\xi_n - P(\xi_n)\|_2 \recht 0$. This means that $\|x_n - y_n \|_2 \recht 0$, where $y_n := \rE_{\rL G \cap (\rL \Lambda)'}(x_n)$.

As we have seen in the proof of Lemma \ref{lemma}, $\rL G \cap (\rL \Lambda)' = \rL K \cap (\rL \Lambda)'$. In particular, $y_n$ belongs to the unit ball of $\rL K$. Recall that we inductively defined $G_{N+1} = G_N *_{K_N} (K_N \times \Z)$. Denote by $g_{N+1}$ the canonical generator of the copy of $\Z$ appearing in this definition of $G_{N+1}$. Using the fact that $u_{g_{N+1}}$ commutes with $\rL K_N$, we get that
\begin{align*}
u_{g_{N+1}} y_n u_{g_{N+1}}^* - y_n &= u_{g_{N+1}} \bigl(y_n - \rE_{\rL K_N}(y_n)\bigr) u_{g_{N+1}}^* +  u_{g_{N+1}} \rE_{\rL K_N}(y_n) u_{g_{N+1}}^* - y_n \\ &= u_{g_{N+1}} \bigl(y_n - \rE_{\rL K_N}(y_n)\bigr) u_{g_{N+1}}^* + \rE_{\rL K_N}(y_n) - y_n \; .
\end{align*}
Because the sets $g_{N+1} (K \setminus K_N) g_{N+1}^{-1}$ and $K$ are disjoint, the elements $u_{g_{N+1}} \bigl(y_n - \rE_{\rL K_N}(y_n)\bigr) u_{g_{N+1}}^*$ and $\rE_{\rL K_N}(y_n) - y_n$ are orthogonal. Hence,
$$\|u_{g_{N+1}} y_n u_{g_{N+1}}^* - y_n\|_2 \geq \| u_{g_{N+1}} \bigl(y_n - \rE_{\rL K_N}(y_n)\bigr) u_{g_{N+1}}^* \|_2 = \|y_n - \rE_{\rL K_N}(y_n)\|_2 \; .$$
So, for every $N$, we get that $\|y_n - \rE_{\rL K_N}(y_n) \|_2 \recht 0$ as $n \recht \infty$.

Fix $N$. Since $y_n$ commutes with $\rL \Lambda$, also $\rE_{\rL K_N}(y_n)$ commutes with $\rL \Lambda$. By Lemma \ref{lemma}, take a sequence $a_n$ in the unit ball of $\bigovt_{k=N}^\infty (A_k,\tau)$ such that $\rE_{\rL K_N}(y_n) = \al(a_n)$. Since the sequence $p_n^{-3}$ is summable, the product of the projections $1-e_n$, $n \geq N$, converges to a minimal projection $f_N$ in $\bigovt_{k=N}^\infty (A_k,\tau)$ with
$$\tau(f_N) = \prod_{n=N}^\infty (1- p_n^{-3}) \; .$$
Put $\eps_N = 1-\tau(f_N)$.
An arbitrary $a$ in the unit ball of $\bigovt_{k=N}^\infty (A_k,\tau)$ then satisfies
$$\|a - \tau(a) 1\|_2 \leq 4 \sqrt{\eps_N} \; .$$
Since $\tau(a_n) = \tau(\rE_{\rL K_N}(y_n)) = \tau(y_n)$, it follows that for all $N,n$, we have
$$\|\rE_{\rL K_N} (y_n) - \tau(y_n) 1 \|_2 \leq 4 \sqrt{\eps_N} \; .$$
Since $\eps_N \recht 0$ when $N \recht \infty$ and since for every fixed $N$, we have $\|y_n - \rE_{\rL K_N}(y_n)\|_2 \recht \infty$ when $n \recht \infty$, we conclude that $\|y_n - \tau(y_n)1\|_2 \recht 0$. So, since $\|x_n-y_n\|_2 \recht 0$, also $\|x_n - \tau(x_n)1\|_2 \recht 0$.
\end{proof}

\end{document}